
\documentclass[10pt]{amsart}
\usepackage{amssymb, amscd, amsthm, epsfig}
\setlength{\textwidth}{12.3 cm}
\setlength{\textheight}{18.5 cm}
\setlength{\evensidemargin}{0in}
\setlength{\oddsidemargin}{0in}
\setlength{\topmargin}{0in}
\setcounter{page}{1}

\newtheorem{thm}{\bfseries Theorem}
\newtheorem{lem}[thm]{\bfseries Lemma}        

\theoremstyle{definition}
\newtheorem*{ack}{\bfseries Acknowledgement}


\def\F{\mathbb F}

\DeclareMathOperator{\Det}{Det}

\def\def\IPEfile{#}\input{\IPEfile}1{\def\IPEfile{#1}\input{\IPEfile}}
%


\begin{document}


\title[\bf Remarks to Arsovski's proof \hfill {}]
{\bf Remarks to Arsovski's proof of\\
Snevily's conjecture}


\author[ {} \hfill G. Harcos, Gy. K\'arolyi and G. K\'os]{}

\address{\vskip -.1in}


\email{ gharcos@renyi.hu }
\email{ karolyi@cs.elte.hu }
\email{ kosgeza@cs.elte.hu }



\maketitle

\centerline{\em Dedicated to our late friend, Andr\'as G\'acs}

\vskip 1in


\noindent GERGELY HARCOS\footnote{
Supported by European Community Grant ERG 239277 and
OTKA Grants K 72731 and PD 75126.}\hskip.1in
\noindent Alfr\'ed R\'enyi Institute of Mathematics,
Hungarian Academy of Sciences,
POB 127,
Budapest, H--1364 Hungary

\noindent GYULA K\'AROLYI\footnote{
Supported by Bolyai Research Fellowship and
OTKA Grants K 67676 and NK 67867.}\hskip.1in
\noindent Institute of Mathematics,
E\"otv\"os University,
P\'azm\'any P. s\'et\'any 1/C,
Budapest, H--1117 Hungary

\noindent G\'EZA K\'OS
\hskip.1in
\noindent Institute of Mathematics,
E\"otv\"os University,
P\'azm\'any P. s\'et\'any 1/C,
Budapest, H--1117 Hungary, and
Computer and Automation Research Institute, Kende utca 13-17,
Budapest, H--1111 Hungary

\vskip.85in

\setlength{\baselineskip}{15pt}


\noindent\textsc{Abstract.}
Let $G$ denote a finite Abelian group, and $\F$ a field whose
multiplicative group $\F^\times$ contains an element whose order equals
the exponent of $G$. For any pair $A=\{a_1,\ldots,a_k\}$,
$B=\{b_1,\ldots,b_k\}$ of $k$-element subsets of $G$ there exist
homomorphisms $\chi_1,\ldots,\chi_k:G\to\F^\times$ such that neither
of the two matrices $(\chi_i(a_j))$ and $(\chi_i(b_j))$ is singular.
This confirms a conjecture of Feng, Sun, and Xiang. We also give a shortened
proof of Snevily's conjecture.




\section{Introduction}

\noindent
Let $G$ denote a finite Abelian group of order $m$ and exponent $n$.
We say that $G$ is {\em fully representable} over a field $\F$ if its
multiplicative group $\F^\times$ contains an element of order $n$.
This happens if and only if the characteristic of the field $\F$ does not
divide $n$, and $\F$ itself contains the splitting field of the polynomial
$x^n-1$ over its prime field. In this case $\F^\times$ contains a unique
cyclic subgroup $H$ of order $n$ that may be
identified
for every such field of the same characteristic,
and every homomorphism from $G$ to
$\F^\times$ maps into $H$. Such group characters with respect to pointwise
multiplication form the character group $\widehat{G}\cong G$. It follows
from the orthogonality relations
\[\sum_{g\in G}\chi_1(g)\chi_2^{-1}(g)=
\begin{cases}
|G| & \text{if $\chi_1=\chi_2$}\\
\ 0 & \text{otherwise}
\end{cases}\]
that the $m\times m$ matrix $(\chi(g))_{g\in G, \chi\in \widehat{G}}$
is nonsingular. Thus, the characters are linearly independent over $\F$
and form a basis in the vector space of all $G\to \F$ functions over $\F$.
\smallskip

\noindent{\em Remark.}
The independence of the columns of the character table can be interpreted as
follows: For any subset $A=\{a_1,\ldots,a_k\}$ of $G$,
those sets of characters $\chi$ for which the
vectors $\chi(A)=\chi(a_i)_{1\le i\le k}$ are independent over $\F$,
form a rank $k$ matroid $\mathcal{M}_A$ over the ground set $\widehat{G}$.
Here we prove that for any two sets $A,B\subseteq G$ of the same
cardinality, the matroids $\mathcal{M}_A$ and $\mathcal{M}_B$ have a common
basis.

\begin{thm}
Assume that the finite Abelian group $G$ is fully representable over the
field $\F$. For any two subsets $A=\{a_1,\ldots,a_k\}$ and
$B=\{b_1,\ldots,b_k\}$ of $G$ there exist characters $\chi_1,\ldots,\chi_k\in
\widehat{G}$ such that both $\Det(\chi_i(a_j))$ and $\Det(\chi_i(b_j))$
are different from zero.
\label{uj1}
\end{thm}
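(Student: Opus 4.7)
My plan is to invoke Edmonds' matroid intersection theorem. For $X\in\{A,B\}$ let $\mathcal M_X$ denote the matroid on $\widehat G$ whose independent sets are the $S\subseteq\widehat G$ for which $(\chi(x_j))_{j=1}^{k}$, $\chi\in S$, are linearly independent in $\F^k$. The remark preceding the statement identifies each $\mathcal M_X$ as a matroid of rank $k$, and the theorem asks precisely that $\mathcal M_A$ and $\mathcal M_B$ share a common basis. By matroid intersection it therefore suffices to verify that
\[ r_A(T)+r_B(\widehat G\setminus T)\ge k \qquad\text{for every } T\subseteq\widehat G. \]

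I would next recast this rank condition via Fourier duality on $G$. Let $V_T\subseteq\F^G$ be the $\F$-linear span of the characters in $T$ and, for $X\subseteq G$, let $L_X\subseteq\F^G$ denote the subspace of functions supported on $X$. The orthogonality relations displayed in the introduction give
\[ r_A(T)=k-\dim(L_A\cap V_{\widehat G\setminus T}), \]
so the rank condition is equivalent to the single inequality
\[ \dim(L_A\cap V_{\widehat G\setminus T})+\dim(L_B\cap V_T)\le k \qquad\text{for every }T\subseteq\widehat G. \]
A cheap bound falls out immediately: since $V_T\cap V_{\widehat G\setminus T}=0$ and both intersections lie in $L_{A\cup B}$, their dimensions sum to at most $|A\cup B|\le 2k$.

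The principal obstacle is improving this bound from $2k$ down to $k$; it is here that the hypothesis that $\F^G=V_T\oplus V_{\widehat G\setminus T}$ splits into \emph{character-spanned} summands (rather than arbitrary complementary subspaces) must enter essentially. My plan is first to reduce to $G$ cyclic of prime-power order by decomposing $\widehat G\cong\prod_p\widehat{G_p}$ across the Sylow subgroups of $G$ and treating each prime separately, and then for $G\cong\Z/p^a$ to encode the inequality as a non-vanishing statement for an explicit polynomial on $\mu_{p^a}^k$, e.g.\ of the shape
\[ \det(x_i^{a_j})\cdot\det(x_i^{b_j})\cdot\prod_{i<j}(x_i-x_j). \]
The principal task is then a coefficient or $p$-adic valuation analysis, in the spirit of Arsovski's proof of Snevily's conjecture: one must identify a leading monomial of this polynomial that survives reduction modulo $x_i^{p^a}-1$ and show its coefficient (respectively, its valuation) is as required. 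The full-representability hypothesis ($\mathrm{char}(\F)\nmid n$) enters precisely at this stage, by guaranteeing that the relevant coefficient or valuation does not vanish and so ruling out the torsion phenomena that otherwise obstruct the step from $|A\cup B|$ down to $k$.
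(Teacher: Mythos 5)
Your opening moves are sound but amount only to a restatement of the theorem, and the part of your plan that would carry the actual content does not work. Since $\mathcal{M}_A$ and $\mathcal{M}_B$ both have rank $k$, asking for a common basis is, by Edmonds' matroid intersection theorem, equivalent to the rank condition $r_A(T)+r_B(\widehat{G}\setminus T)\ge k$, and your Fourier-duality translation of $r_A(T)$ is correct up to a harmless inversion twist: under the pairing $\sum_{g\in G}f(g)h(g)$ the annihilator of $V_T$ is spanned by the characters outside $T^{-1}=\{\chi^{-1}:\chi\in T\}$, so $r_A(T)=k-\dim\bigl(L_A\cap V_{\widehat{G}\setminus T^{-1}}\bigr)$; substituting $T\mapsto T^{-1}$ repairs your displayed inequality. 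But at that point nothing has been gained: the inequality $\dim(L_A\cap V_{\widehat{G}\setminus T})+\dim(L_B\cap V_T)\le k$ carries the full difficulty of the theorem, and your argument stops at the trivial bound $2k$. The proposed route through the hard part then breaks at its first step: the decomposition $\widehat{G}\cong\prod_p\widehat{G_p}$ does \emph{not} reduce the problem to cyclic $p$-groups, because a character of $G$ restricted to $A$ is the entrywise product of its component restrictions, and ranks of such Hadamard products do not factor — this is exactly why Snevily's conjecture, long known for cyclic groups \cite{A,DKSS}, resisted reduction from general odd Abelian groups until \cite{Ars}. Moreover, your candidate polynomial $\det(x_i^{a_j})\det(x_i^{b_j})\prod_{i<j}(x_i-x_j)$ makes no reference to $T$, so it cannot encode the rank condition for an arbitrary $T\subseteq\widehat{G}$; and your closing claim that full representability ($\mathrm{char}\,\F\nmid n$) forces the relevant coefficient not to vanish misreads where the difficulty lies — the decisive case is characteristic $2$ (where full representability holds for every odd $n$), and there the obstruction is sign cancellation among the $k!$ terms, not torsion.

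For comparison, the paper's proof sidesteps matroid intersection entirely and never verifies the Edmonds condition for individual $T$. Assuming the conclusion fails for $A,B$, it passes to the transcendental extension $\F'=\F(t_1,\ldots,t_m)$, expands an arbitrary $\varphi=\sum_{u}\lambda_u\chi_u$, and applies the Cauchy--Binet formula to get $\Det(\varphi(a_i+b_j))=0$ identically; choosing $\varphi(g_i)=t_i$ generic forces every one of the $k!$ monomials to cancel, i.e.\ every permutation $\pi$ would share its multiset $\{a_i+b_{\pi(i)}\}$ with some $\sigma\ne\pi$ — contradicting Lemma~\ref{lemma}. If you wish to salvage your matroid-theoretic framework, the missing content is precisely a proof of your displayed rank inequality for every $T$, and no argument of the Cauchy--Binet/generic-function type is known to localize to a single $T$ in that way; as written, your proposal has a genuine gap at its core.
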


This confirms a conjecture of Feng, Sun, and Xiang \cite{FSX}. Applying
the natural isomorphism between $G$ and $\widehat{\widehat{G}}$, one obtains
the following dual version.

\begin{thm}
Under the conditions of the previous theorem, let
${\rm X}=\{\chi_1,\ldots,\chi_k\}$ and
$\Psi=\{\psi_1,\ldots,\psi_k\}$ be two
subsets of $\widehat{G}$. Then there exist elements $a_1,\ldots,a_k\in G$
such that both $\Det(\chi_i(a_j))$ and $\Det(\psi_i(a_j))$
are different from zero.
\label{uj2}
\end{thm}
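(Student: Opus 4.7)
The plan is to deduce Theorem \ref{uj2} directly from Theorem \ref{uj1} by applying it to the dual group $\widehat{G}$, exploiting the canonical isomorphism $G \cong \widehat{\widehat{G}}$.

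First I would verify the hypothesis: the character group $\widehat{G}$ is itself a finite Abelian group isomorphic to $G$, so it has the same order and exponent as $G$. Consequently, $\widehat{G}$ is fully representable over $\F$, and Theorem \ref{uj1} may be applied with $\widehat{G}$ in place of $G$.

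Next I would apply Theorem \ref{uj1} to the group $\widehat{G}$ with the two $k$-element subsets $\mathrm{X}=\{\chi_1,\ldots,\chi_k\}$ and $\Psi=\{\psi_1,\ldots,\psi_k\}$ of $\widehat{G}$. This yields characters $\alpha_1,\ldots,\alpha_k\in\widehat{\widehat{G}}$ such that
\[
\Det\bigl(\alpha_i(\chi_j)\bigr)\neq 0 \quad\text{and}\quad \Det\bigl(\alpha_i(\psi_j)\bigr)\neq 0.
\]
Using the natural isomorphism between $G$ and its double dual $\widehat{\widehat{G}}$, each $\alpha_i$ corresponds to a unique $a_i\in G$ satisfying $\alpha_i(\chi)=\chi(a_i)$ for every $\chi\in\widehat{G}$. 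With this identification, the matrix $\bigl(\alpha_i(\chi_j)\bigr)_{i,j}$ equals $\bigl(\chi_j(a_i)\bigr)_{i,j}$, which is the transpose of $\bigl(\chi_i(a_j)\bigr)_{i,j}$; hence the two matrices share the same determinant. The analogous remark applies to the characters $\psi_j$. Therefore the elements $a_1,\ldots,a_k\in G$ obtained in this way satisfy the conclusion of Theorem \ref{uj2}.

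There is essentially no obstacle beyond bookkeeping: the only things to check are that $\widehat{G}$ inherits full representability from $G$ (immediate from $\widehat{G}\cong G$) and that passing through the double-dual identification amounts to transposing the two matrices, which preserves nonvanishing of determinants.
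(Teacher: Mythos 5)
Your proof is correct and is exactly the argument the paper intends: it derives Theorem~\ref{uj2} from Theorem~\ref{uj1} via the canonical isomorphism $G\cong\widehat{\widehat{G}}$, which the paper states in one line without spelling out the details. Your verification that $\widehat{G}$ inherits full representability and that the double-dual identification only transposes the matrices fills in the routine bookkeeping correctly.
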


Using the exterior algebra method, Feng, Sun, and Xiang \cite{FSX} pointed
out that a weaker form of Theorem~\ref{uj1} would imply Snevily's conjecture
\cite{snevily}, which after a series of partially successful attempts
\cite{A,DKSS,GW}, see also \cite{Sun,TV}, was recently proved by Arsovski
\cite{Ars}. Thus, one obtains the following affirmative answer for Snevily's
problem.

\begin{thm}
Let $G$ be an Abelian group of odd order.
For any two subsets $A=\{a_1,\ldots,a_k\}$ and
$B=\{b_1,\ldots,b_k\}$ of $G$ there exists a permutation $\pi\in S_k$ such
that the elements $a_1+b_{\pi(1)},\ldots,a_k+b_{\pi(k)}$ are pairwise
different.
\label{snev}
\end{thm}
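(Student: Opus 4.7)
The plan is to combine Theorem~\ref{uj1} with the exterior algebra reduction of Feng, Sun, and Xiang~\cite{FSX}. Since the exponent of $G$ divides $|G|$ and is therefore odd, $G$ is fully representable over $\F=\C$. Theorem~\ref{uj1} applied to the $k$-subsets $A,B\subset G$ furnishes characters $\chi_1,\dots,\chi_k\in\widehat{G}$ for which the two $k\times k$ matrices $\mathcal{A}=(\chi_i(a_j))$ and $\mathcal{B}=(\chi_i(b_j))$ are both non-singular.

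Next I would reduce Snevily's conclusion to a determinantal non-vanishing. For each $\pi\in S_k$ I set
\[
M^{(\pi)}=\bigl(\chi_i(a_j+b_{\pi(j)})\bigr)_{i,j}=\bigl(\mathcal{A}_{ij}\,\mathcal{B}_{i,\pi(j)}\bigr)_{i,j}.
\]
If $\Det M^{(\pi)}\neq 0$ for some $\pi$, then no two columns of $M^{(\pi)}$ can coincide, and therefore the sums $a_j+b_{\pi(j)}$ must be pairwise distinct. It thus suffices to exhibit a single $\pi\in S_k$ with $\Det M^{(\pi)}\neq 0$.

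To produce such a $\pi$ I would aggregate over $S_k$. A brief computation with the Leibniz formula, together with the substitution $\tau=\pi\circ\sigma$ in the inner sum, yields the identity
\[
\sum_{\pi\in S_k}\operatorname{sgn}(\pi)\,\Det M^{(\pi)}=\per(\mathcal{A})\cdot\Det(\mathcal{B}).
\]
By Theorem~\ref{uj1} the factor $\Det(\mathcal{B})$ is already nonzero, so the whole problem reduces to securing the non-vanishing of the permanent $\per(\mathcal{A})$ for a suitable choice of characters.

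The main obstacle will be this last step, since Theorem~\ref{uj1} in isolation offers no control over the permanent. It is here that the odd order of $G$ becomes decisive: all entries of $\mathcal{A}$ lie in the group $\mu_n$ of $n$-th roots of unity with $n$ odd, and in particular $-1\notin\mu_n$. Already when $k=2$, one checks directly that $\per(\mathcal{A})=0$ together with $\Det(\mathcal{A})\neq 0$ would force $(\chi_1\chi_2^{-1})(a_1-a_2)=-1$, which is impossible because $\chi_1\chi_2^{-1}$ has odd order. The systematic higher-dimensional analogue --- organized by the exterior algebra bookkeeping of~\cite{FSX}, possibly by iterating Theorem~\ref{uj1} with suitably twisted characters to push the permanent into the non-vanishing regime --- is the delicate heart of the proof, and the step I expect to require the most care.
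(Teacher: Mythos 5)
Your first two steps are sound: the observation that $\Det M^{(\pi)}\neq 0$ forces the sums $a_j+b_{\pi(j)}$ to be pairwise distinct is correct, and your aggregated identity
\[
\sum_{\pi\in S_k}\operatorname{sgn}(\pi)\,\Det M^{(\pi)}\ =\ \per(\mathcal{A})\cdot\Det(\mathcal{B})
\]
checks out (with the substitution $\tau=\pi\circ\sigma^{-1}$). But the final step is a genuine gap, not merely a delicate one: over $\C$, Theorem~\ref{uj1} gives you no handle whatsoever on $\per(\mathcal{A})$, and your $k=2$ computation does not generalize. For $k\ge 3$ the permanent of a matrix of odd-order roots of unity can perfectly well vanish (e.g.\ a sum of six cube roots of unity vanishes whenever each root occurs twice), and no analogue of Chebotarev-type nonvanishing is available for permanents. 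The hoped-for ``iteration with twisted characters'' is not an argument, and as far as is known the permanent route over $\C$ is exactly the obstruction that blocked earlier attempts on Snevily's conjecture.

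The fix --- and this is where your diagnosis of the role of odd order goes astray --- is that oddness of $|G|$ is not used to exclude $-1$ among character values over $\C$; it is used to make $G$ fully representable over a field of \emph{characteristic} $2$: since the exponent $n$ of $G$ is odd, $x^n-1$ is separable over $\F_2$ and splits in a suitable $\F_{2^d}$. Working there, signs disappear and $\per=\Det$, so your own identity collapses to
\[
\sum_{\pi\in S_k}\Det M^{(\pi)}\ =\ \Det(\mathcal{A})\cdot\Det(\mathcal{B})\ \neq\ 0
\]
by Theorem~\ref{uj1} applied over that field, whence some $\Det M^{(\pi)}\neq 0$ and your first step concludes. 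This characteristic-$2$ trick (due in essence to Arsovski, and the reason the paper states its key identity as ``valid in characteristic $2$'') is exactly the paper's route; with that single change your skeleton becomes a complete and correct proof.
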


The proof of Theorem~\ref{uj1}, at least for finite fields $\F$ of
characteristic 2, is implicit in Arsovski's paper. Here we present a variant
of his argument with considerable simplifications, which completely settles
the conjecture of Feng, Sun, and Xiang.

\section{The Proofs}

\noindent
Assume that, for a given finite Abelian group $G$, the statement of Theorem
\ref{uj1} fails for a certain field $\F$ of characteristic $c$, then it also
fails for every field of characteristic $c$ over which $G$ is fully
representable. In particular, it fails for the
purely transcendental extension $\F'=\F(t_1,\ldots,t_m)$.
Accordingly, let $A=\{a_1,\ldots,a_k\}$ and
$B=\{b_1,\ldots,b_k\}$ be two subsets of $G$ such that
$$\Det(\chi_i(a_j))\Det(\chi_i(b_j))=0$$
holds for every $k$-tuple of characters $\chi_1,\ldots,\chi_k\in \widehat{G}$.
Write $\widehat{G}=\{\chi_1,\ldots,\chi_m\}$.
Let $\varphi$ denote an arbitrary function from $G$ to $\F'$; it can be
uniquely expressed as $\varphi=\sum_{u=1}^m\lambda_u\chi_u$ with
Fourier-coefficients $\lambda_u\in \F'$. Consider the $k\times m$ matrices
$M=(m_{iu})$ and $N=(n_{iu})$ with $m_{iu}=\lambda_u\chi_u(a_i)$,
resp. $n_{iu}=\chi_u(b_i)$. In view of the Cauchy--Binet formula
and the multilinearity of the determinant, for the
$k\times k$ matrix $L$ with $(i,j)$ entry $\varphi(a_i+b_j)$ we obtain
\begin{align*}
\Det L&\ =\ \Det (\sum_{u=1}^m\lambda_u\chi_u(a_i+b_j))\ =\
\Det (\sum_{u=1}^m\lambda_u\chi_u(a_i)\chi_u(b_j))\\
&\ =\ \Det(MN^\top)\ =\
\sum_{1\le u_1<\dots<u_k\le m}\Det(m_{iu_j})\Det(n_{iu_j})\\
&\ =\ \sum_{1\le u_1<\dots<u_k\le m}(\lambda_{u_1}\cdots\lambda_{u_k})
\Det(\chi_{u_i}(a_j))\Det(\chi_{u_i}(b_j))\\
&\ =\ 0.
\end{align*}

Enumerate the elements of $G$ as $g_1,\ldots,g_m$, and apply the above formula
for the function $\varphi$ that maps each $g_i$ to the corresponding $t_i$.
Then $\Det L$ is the alternating sum of $k!$ monomial terms in
$t_1,\ldots,t_m$, each of degree $k$. Because of the algebraic independence of
the elements $t_i$, $\Det L$ can only vanish if each monomial term cancels
out, either because it appears with both $+$ and $-$ signs, or because it
appears at least $c$ times with the same sign. Anyway, for any permutation
$\pi\in S_k$ there exists a permutation $\sigma\ne \pi\in S_k$ such that
the elements $a_1+b_{\sigma(1)},\ldots,a_k+b_{\sigma(k)}$, in some order,
coincide with the elements $a_1+b_{\pi(1)},\ldots,a_k+b_{\pi(k)}$.
According to the following simple combinatorial lemma inherent in \cite{Ars},
this is impossible.

\begin{lem}
Let $A=\{a_1,\ldots,a_k\}$ and $B=\{b_1,\ldots,b_k\}$ be subsets of an
arbitrary Abelian group $G$. There exists a permutation $\pi\in S_k$ such
that for any permutation $\sigma\ne \pi\in S_k$, the multisets
$\{a_1+b_{\pi(1)},\ldots,a_k+b_{\pi(k)}\}$ and
$\{a_1+b_{\sigma(1)},\ldots,a_k+b_{\sigma(k)}\}$ are different.
\label{lemma}
\end{lem}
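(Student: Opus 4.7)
The plan is to prove the lemma by induction on $k$; the case $k=1$ is vacuous. For the inductive step, fix an arbitrary element $g \in A+B$ and set
\[
A^* := \{a \in A : g - a \in B\}, \qquad B^* := g - A^* \subseteq B,
\]
so that $A^*$ and $B^*$ share a common size $u \geq 1$. The key observation is that the pairs $(a, g-a)$ with $a \in A^*$ are exactly the pairs $(a,b) \in A \times B$ satisfying $a + b = g$; consequently, any bijection $A \to B$ produces at most $u$ sums equal to $g$, and equality forces the bijection to send each $a \in A^*$ to $g-a$.

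I would then apply the induction hypothesis to the sets $A \setminus A^*$ and $B \setminus B^*$, which have common size $k - u < k$, to obtain a bijection $\pi' : A \setminus A^* \to B \setminus B^*$ whose multiset of sums is realized by no other such bijection. Define $\pi : A \to B$ to agree with $\pi'$ on $A \setminus A^*$ and to send each $a \in A^*$ to $g - a$. By construction the value $g$ appears with multiplicity exactly $u$ in the multiset of $\pi$, because no pair $(a,b) \in (A\setminus A^*) \times (B\setminus B^*)$ can sum to $g$: $a \notin A^*$ means $g - a \notin B$.

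To finish, suppose $\sigma : A \to B$ has the same multiset of sums as $\pi$. Then $\sigma$ also produces the value $g$ with multiplicity $u$, which by the key observation forces $\sigma|_{A^*} = \pi|_{A^*}$. After removing these common pairs, $\sigma$ restricts to a bijection $A \setminus A^* \to B \setminus B^*$ whose multiset of sums agrees with that of $\pi'$, so the induction hypothesis yields $\sigma|_{A \setminus A^*} = \pi'$, and consequently $\sigma = \pi$. The only substantive step is the key observation about pairs summing to $g$; selecting $g$ cleverly turns out to be unnecessary, since any $g \in A+B$ makes the argument work and the recursion terminates because $u \geq 1$.
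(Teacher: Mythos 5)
Your proof is correct and is essentially the paper's own argument: both proceed by induction on $k$, fix a value $g\in A+B$ (the paper takes $g=a_1+b_1$), pair off exactly those $a$ with $g-a\in B$ (your $A^*$ is the paper's index set $I$), and extend a permutation obtained by induction on the complementary sets, with your case $u=k$ matching the paper's separate case $\ell=k$. You additionally spell out the uniqueness verification (maximal multiplicity of $g$ forces $\sigma$ to agree with $\pi$ on $A^*$, then induction forces agreement on the rest) that the paper leaves implicit, but the decomposition and key observation are the same.
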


\begin{proof}
Fix the positive integer $k$, and assume that the lemma has already been
verified for smaller values of $k$. Write $a_1+b_1=g$, and
consider the set $I$ of all indices $i$ for which there exists an index $j$
with $a_i+b_j=g$. We may assume that $I=\{1,\ldots, \ell\}$. In
the case $\ell=k$
there is a unique permutation $\pi$ with $a_1+b_{\pi(1)}=\dots=a_k+b_{\pi(k)}
=g$. If $1\le \ell<k$, then fix the first $\ell$ values of $\pi$ by
$a_1+b_{\pi(1)}=\dots=a_\ell+b_{\pi(\ell)}=g$, and
apply the induction hypothesis for the multisets
$A'=\{a_{\ell+1},\ldots,a_k\}$, $B'=\{b_i\mid i\ne \pi(1),\ldots,\pi(\ell)\}$
to extend it to a permutation $\pi\in S_k$.
\end{proof}

This contradiction proves Theorem~\ref{uj1}.
Theorem~\ref{snev} follows by the sophisticated argument of \cite{FSX} or
by the elegant reasoning of Arsovski \cite{Ars}.
In retrospect, the proof only relies on the identity
(valid in characteristic 2)
\[\Det(\varphi(a_i+b_j))
\ =\ \sum_{1\leq u_1<\dots<u_k\leq m}\ \sum_{\pi\in S_k}
\Det(\lambda_{u_j}\chi_{u_j}(a_i+b_{\pi(i)}))\]
and the existence of $\varphi=\sum_{u=1}^m\lambda_u\chi_u:G\to\F'$, guaranteed by Lemma~\ref{lemma}, for
which the left hand side is nonzero. Indeed, if Theorem~\ref{snev} fails then each
determinant on the right hand side is zero because the underlying matrix has two equal rows. The indentity can be proved directly using the multilinearity of the determinant and the multiplicativity
of the characters $\chi_u$:
\begin{align*}
\Det(\varphi(a_i+b_j))
&\ =\ \sum_{1\leq u_1,\dots,u_k\leq m}\ \Det(\lambda_{u_i}\chi_{u_i}(a_i+b_j))\\
&\ =\ \sum_{\substack{1\leq u_1,\dots,u_k\leq m\\\text{distinct}}}\ \Det(\lambda_{u_i}\chi_{u_i}(a_i+b_j))\\
&\ =\ \sum_{\substack{1\leq u_1,\dots,u_k\leq m\\\text{distinct}}}\ \sum_{\pi\in S_k}\prod_{i=1}^k(\lambda_{u_i}\chi_{u_i}(a_i+b_{\pi(i)}))\\
&\ =\ \sum_{1\leq u_1<\dots<u_k\leq m}\ \sum_{\pi\in S_k}
\Det(\lambda_{u_j}\chi_{u_j}(a_i+b_{\pi(i)})).
\end{align*}
It would be very interesting to find a purely combinatorial proof.

\begin{ack}
We are grateful to Andr\'as Bir\'o, Andr\'as Frank, P\'eter P\'al P\'alfy and
Csaba Szab\'o for discussions over this topic.
\end{ack}

\end{document}